\newcommand{\tor}{\mathds{T}^2}
\newcommand{\R}{\mathds{R}}
\newcommand{\N}{\mathds{N}}
\newcommand{\Z}{\mathds{Z}}
\newcommand{\SL}[1]{\textup{SL}_{#1}(\R)}
\newcommand{\norm}[1]{\left\lVert#1\right\rVert}
\newcommand{\Id}{\textup{Id}}
\newcommand{\rvline}{\hspace*{-\arraycolsep}\vline\hspace*{-\arraycolsep}}
\newenvironment{revision}{\color{black}}{\color{black}}
\newenvironment{nalign}{
    \begin{equation}
    \begin{aligned}
}{
    \end{aligned}
    \end{equation}
    \ignorespacesafterend
}
\newtheorem{theorem}{Theorem}[section]
\newtheorem{lemma}[theorem]{Lemma}
\newtheorem{corollary}[theorem]{Corollary}
\newtheorem{definition}[theorem]{Definition}
\theoremstyle{definition}
\theoremstyle{definition}
\newtheorem{remark}[theorem]{Remark}
\title{Continuous-time extensions of discrete-time cocycles}
\author[1]{Robin Chemnitz}
\author[1,3]{Maximilian Engel}
\author[1,2]{P\'eter Koltai}
\affil[1]{Institute of Mathematics, Freie Universit\"at Berlin, 14195 Berlin, Germany}
\affil[2]{Department of Mathematics,  University of Bayreuth, 95440 Bayreuth, Germany}
\affil[3]{KdV Institute for Mathematics, University of Amsterdam, 1098 XG Amsterdam, The Netherlands}
\date{}
\begin{document}

\maketitle

\begin{abstract}
    We consider linear cocycles taking values in $\textup{SL}_d(\mathbb{R})$ driven by homeomorphic transformations of a smooth manifold, in discrete and continuous time. We show that any discrete-time cocycle can be extended to a continuous-time cocycle, while preserving its characteristic properties. We provide a necessary and sufficient condition under which this extension is canonical in the sense that the base is extended to an associated suspension flow and that 
    \begin{revision}the discrete-time cocycle is recovered as the time-1 map of the continuous-time cocycle.\end{revision} 
    Further, we refine our general result for the case of (quasi-)periodic driving. 
    \begin{revision}
        We use our findings to construct a non-uniformly hyperbolic continuous-time cocycle in $\SL{2}$ over a uniquely ergodic driving.
    \end{revision}
\end{abstract}


\section{Introduction}

One of the most commonly studied types of linear cocycles are those taking values in the special linear group $\SL{d}$. Motivated by questions from random matrix, spectral and non-uniformly hyperbolic theory, there has been considerable effort to understand and categorise their Lyapunov exponents and Oseledets splittings \cite{Avila2009, Avila2011, Knill1990, Oseledec1968, SimonTaylor1985, Viana2008, Young1995}.
Classically, these cocycles are studied over a discrete-time driving while continuous-time cocycles in $\SL{d}$ have received much less attention in the past, even though they are of high interest for the study of nonautonomous ODEs. 
\begin{revision}In particular, continuous-time cocycles are often studied in terms of their time-1 maps. We address the question which discrete-time cocycles can occur as the time-1 map of a continuous-time cocycle.\end{revision}

In our setting the discrete-time driving is given by a homeomorphism on a compact, smooth manifold. Such a map can be extended to continuous time via the classical \emph{suspension flow} construction~\cite[\S 1.3]{krengel2011ergodic}. Although one could extend the \emph{whole} skew-product associated with a discrete-time cocycle by creating an associated suspension flow,
the cocycle property would be lost in this process.
Given a discrete-time cocycle we study the question whether there is a continuous-time cocycle over a suspension flow whose time-1 map coincides with the discrete-time cocycle. \begin{revision}We call such a continuous-time cocycle a \emph{canonical extension}, cf.~Definition~\ref{def:cano}.\end{revision} The ``autonomous'' but infinite-dimensional version of our question, namely where bounded linear operators are extended to $C_0$-semigroups, was considered in~\cite{eisner2009embedding}. A special case that yields particularly interesting results is that of an (irrational) circle rotation as the driving system. \begin{revision}For these systems, we not only study whether a given discrete-time cocycle has a canonical extension but also how cocycles without canonical extensions need to be modified such that a canonical extension is possible.\end{revision} Our main results are as follows:
\begin{enumerate}
    \item Theorem \ref{thm:nullhomotopic_extension} provides an equivalent, homotopy-based characterisation of the canonical extendability of discrete-time cocycles in $\SL{d}$ over arbitrary driving.
    \item Corollary~\ref{crl:n_geq_3_extension} shows that for (quasi-)periodic driving and \begin{revision}fibre-dimension $d\ge 3$, a canonical
    extension is always possible, after possibly modifying the base space.\end{revision}
    \item Corollary~\ref{crl:n_2_extension} paves the way to extend any discrete-time cocycle over (quasi-)periodic driving in $\SL{2}$ to continuous time upon adding an auxiliary dimension.
\end{enumerate}

\begin{revision}
    The statement of Theorem \ref{thm:nullhomotopic_extension} is of interest in both directions. Not only does the theorem show which discrete-time cocycles can be extended to continuous time, but also provides a characterisation of which discrete-time cocycles can occur as time-1 maps of continuous-time cocycles.
\end{revision}
We would like to emphasise that while in autonomous dynamical system theory an extension from discrete time to continuous time requires in general an added auxiliary dimension, our results show that for 
a (quasi-)periodically driven system
an additional dimension is only needed in the case \begin{revision}of fibre-dimension\end{revision} $d=2$. For $d\ge 3$, any discrete-time cocycle can be extended to a continuous-time cocycle of the same dimensionality\begin{revision}, after possibly modifying the base space. We note that this modification leaves the dynamical characteristics of the cocycle, like its Lyapunov exponents, Oseledets spaces and (non-)uniform hyperbolicity unchanged, cf.~Remark  \ref{rem:preserve_hyper}.\end{revision}

\begin{revision}
    In addition to the general theory, we will construct a discrete-time cocycle in $\SL{2}$ over an irrational circle rotation that is non-uniformly hyperbolic and has a canonical extension to continuous time. This shows that there is a continuous-time cocycle in $\SL{2}$ over a uniquely ergodic driving (i.e.~with unique invariant probability measure) which is non-uniformly hyperbolic. Prior to the present work, the authors were not aware of any continuous-time cocycle with these properties. The existence of such a cocycle stands in contrast to the uniform ergodic theorem of Oxtoby \cite{oxtoby1952ergodic}, which states that the Birkhoff ergodic theorem for continuous functions and a uniquely ergodic system holds uniformly. Hence, there is no analogous uniform multiplicative ergodic theorem for cocycles with uniquely ergodic driving, neither in discrete nor in continuous time. 
\end{revision}

The remainder of the paper is structured as follows. Section \ref{sec:preliminaries} introduces continuous cocycles in discrete and continuous time, the MET and homotopies. In Section \ref{sec:cont_time_extensions}, we study continuous-time extensions of discrete-time cocycles and characterise under which assumptions they exist. In Section \ref{sec:construction}, these results are applied to \begin{revision}construct a continuous-time cocycle on $\SL{2}$ over a uniquely ergodic driving that is non-uniformly hyperbolic.\end{revision}

\section{Preliminaries}\label{sec:preliminaries}
\subsection{Cocycles in $\SL{d}$}
We briefly introduce continuous cocycles in $\SL{d}$ in discrete time for which we aim to find continuous-time extensions. Let $\Theta \subset \R^n$ be a compact, smooth manifold with or without boundary \cite{lee2012smooth}. Consider a homeomorphism $\phi: \Theta \to \Theta$. Let $\mu$ be a $\phi$-ergodic probability measure on~$\Theta$. By the theorem of Krylov--Bogolyubov, such a measure $\mu$ exists but is in general not unique.

A continuous (not to be confused with continuous-time) cocycle in $\SL{d}$ over $\phi$ is generated by a continuous map
\begin{equation}
    \label{eq:cocycle}
    A: \Theta \to \SL{d},
\end{equation}
where $\SL{d}$ denotes the special linear group of degree $d$, i.e.~the group of $d\times d$ matrices with determinant~$1$. We also use the notation $A\in C(\Theta, \SL{d})$.
The values of $A$ are denoted by~$A_\theta$. For simplicity, we restrict ourselves to cocycles in~$\SL{d}$, since any cocycle in $\textup{GL}_d(\R)$ with positive determinant can be reduced to one in $\SL{d}$ via rescaling. Since the driving $\phi$ is invertible, we can define the $n$-step map of $A$ in two-sided time, i.e.~for $n\in \Z$
\begin{equation}\label{eq:n_step_cocycle}
    A_\theta^n := \begin{cases} A_{\phi^{n-1}\theta}\cdot\hdots \cdot A_{\phi\theta}\cdot A_\theta, \quad & n>0,\\
    \Id, &n=0,\\
     A_{\phi^{n}\theta}^{-1} \cdot \hdots \cdot A_{\phi^{-2}\theta}^{-1} \cdot A_{\phi^{-1}\theta}^{-1}, \quad &n<0.
    \end{cases}
\end{equation}
With this definition, $A$ satisfies the cocycle property
\begin{equation}
    A^{m+n}_\theta = A^{n}_{\phi^m \theta} \cdot A^m_\theta, \quad \forall n,m\in \Z,
\end{equation}
for all $\theta \in \Theta$. In particular, we have the identity
\begin{equation}\label{eq:negative_time_identity}
    A^{-n}_\theta = (A^n_{\phi^{-n} \theta})^{-1}, \quad \forall n\in \N.
\end{equation}
Since a discrete-time cocycle is uniquely represented by its generator, we use the same symbol $A$ for both. Note that $A_\theta^n$ never refers to the matrix power. 

The canonical way of extending a discrete-time map $\phi:\Theta \to \Theta$ to continuous time is via a suspension flow construction \cite{brin2002introduction}. Consider the so-called \emph{mapping torus}
\begin{equation}
    \Theta_1 = (\Theta \times [0,1] )/_\sim,
\end{equation}
over the equivalence relation $(\theta, 1) \sim (\phi(\theta), 0)$. On this space, we define the continuous-time flow
\begin{nalign}\label{eq:suspension_flow}
    \phi^t: \hspace{3mm}\Theta_1 &\to \Theta_1 \\
    (\theta, r) &\mapsto (\phi^{\lfloor r+t \rfloor}\theta ,r+t \mod 1),
\end{nalign}
where $\phi^{\lfloor r+t \rfloor}:\Theta \to \Theta$ is the discrete time map $\phi$ applied $\lfloor r+t \rfloor$ times. The flow $\phi^t$ moves points in $\Theta_1$ up in the second coordinate until they hit the ceiling $\Theta \times \{1\}$. Since the ceiling of $\Theta_1$ is associated to the bottom $\Theta_1 \times \{0 \}$ via the equivalence relation, this defines the flow $\phi^t$ for all times $t\geq 0$. The flow~\eqref{eq:suspension_flow} is well-defined for all $t\in \R$ such that $\phi^t$ is an invertible flow. Note that the time-1 map of $\phi^t$ coincides with the discrete-time map $\phi :\Theta \to \Theta$ in the sense that~$\phi^1(\theta, r) = (\phi(\theta), r)$. Naturally, $\phi^t$ is ergodic with respect to the measure $\mu \otimes \lambda$, where $\lambda$ denotes the Lebesgue measure on~$[0,1]$. 

We define a continous-time cocycle via a generator represented by a continuous map
\begin{equation}
    G: \Theta_1 \to  \mathfrak{sl}_d(\R),
\end{equation}
where $ \mathfrak{sl}_d(\R)$ is the special linear Lie algebra of real $d\times d$ matrices with trace 0. For each $(\theta, r) \in \Theta_1$ this generator defines a nonautonomous ordinary differential equation (ODE) in $\R^d$
\begin{equation}\label{eq:cont_time_ODE}
    \partial_t x(t) = G(\phi^t (\theta, r)) x(t).
\end{equation}
We denote the fundamental solution to this ODE by $\Phi^t_{(\theta, r)}$, i.e.
\begin{equation}
    \Phi_{(\theta, r)}^tx(0) = x(t), \quad \forall t\ge 0.
\end{equation}
Equivalently, $\Phi$ is described as the unique solution to the nonautonomous ODE in $\SL{d}$
\begin{equation}\label{eq:cont_time_cocycle_ODE}
    \Phi^0_{(\theta, r)}=\Id, \qquad \partial_t \Phi^t_{(\theta, r)} = G(\phi^t(\theta, r)) \cdot \Phi^t_{(\theta, r)}.
\end{equation}
Since $\phi^t$ is invertible, we can solve the ODE for both positive and negative time. This defines a continuous map 
\begin{align}
\begin{split}
    \Phi : \Theta_1 \times \R &\to \SL{d}, \\
    ((\theta, r), t) &\mapsto \Phi^t_{(\theta, r)}.
\end{split}
\end{align}
 Liouville's formula guarantees that $\Phi_{(\theta, r)}^t\in \SL{d}$ since $G(\theta, r)\in  \mathfrak{sl}_d(\R)$ for all ${(\theta, r)} \in \Theta_1$. We call $\Phi$ the cocycle generated by $G$. Indeed, one can check that $\Phi$ satisfies the cocycle property
\begin{equation}
    \Phi_{(\theta, r)}^{s+t}=\Phi_{\phi^s {(\theta, r)}}^t\cdot \Phi_{(\theta, r)}^s, \quad \forall s,t\in \R,
\end{equation}
for all ${(\theta, r)} \in \Theta_1$. 

\subsection{The multiplicative ergodic theorem}\label{sec:MET}
The central tool in the study of linear cocycles is the multiplicative ergodic theorem (MET) which characterises Lyapunov exponents and Oseledets spaces. All notions and theorems introduced in this section hold for discrete and continuous time alike. Hence, let $\mathfrak{T}$ be either $\Z$ or $\R$, and $\Omega$ be $\Theta$ or $\Theta_1$ respectively. The driving $\phi^t:\Omega \to \Omega$ for $t\in \mathfrak{T}$ is well-defined for both the discrete and continuous-time case.

Consider a continuous cocycle
\begin{align}
\begin{split}
    \Phi : \Omega \times \mathfrak{T} &\to \SL{d}, \\
    (\omega, t) \hspace{1mm} &\mapsto \Phi^t_\omega.
\end{split}
\end{align}
A classical result is the existence of the so-called Furstenberg--Kesten limit \cite{FurstenbergKesten1960},
\begin{equation}\label{eq:FK}
    \Lambda(\omega) = \lim_{t\to \infty} \frac{1}{t} \log \norm{\Phi_\omega^t}.
\end{equation}
There is a $\phi$-invariant set $\Omega'$ of full $\mu$-measure on which this limit exists and takes a constant value \begin{revision}$\Lambda^*$\end{revision}.

For $\omega \in \Omega$ and $x\in \R^d$, we define the upper Lyapunov exponent of $x$ in $\omega$
\begin{equation}
    \overline{\lambda}(\omega, x) = \limsup_{t\to \infty} \frac{1}{t} \log \norm{\Phi^t_\omega x}.
\end{equation}
In the case that the limit is exact we write $\lambda(\omega, x)$ and call it an exact Lyapunov exponent or simply Lyapunov exponent. We also define the lower backwards Lyapunov exponent of $x$ in $\omega$
\begin{equation}
    \underline{\lambda}^-(\omega, x) =\liminf_{t\to -\infty} \frac{1}{t} \log \norm{\Phi^t_\omega x}.
\end{equation}
We note that the fraction $\frac{1}{t}$ is negative. In the case that the limit is exact we write $\lambda^-(\omega, x)$. We formulate a version of the multiplicative ergodic theorem for continuous cocycles over an ergodic base (cf.~e.g.~\cite{Viana2014}).
\begin{theorem} \label{thm:MET}
    There are a $\phi$-invariant set $\Omega' \subset \Omega$ of full $\mu$-measure and numbers $\lambda_1 > \hdots > \lambda_k$\begin{revision}, where $\lambda_1=\Lambda^*$,\end{revision} such that for every $\omega\in \Omega'$ there is a measurable decomposition
    \begin{equation}
        \R^d = E_1(\omega) \oplus \cdots \oplus E_k(\omega),
    \end{equation}
    called \emph{Oseledets splitting}, with the following properties
    \begin{enumerate}[(i)]
        \item $\Phi^t_\omega E_i(\omega)= E_i(\phi^t \omega), \quad t\in \mathfrak{T}$;
        \item $\lambda(\omega, x) = \lambda^-(\omega, x)=\lambda_i, \quad x\in E_i(\omega)$.
    \end{enumerate}
\end{theorem}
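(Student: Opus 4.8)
The plan is to deduce the statement from the classical two-sided (invertible) multiplicative ergodic theorem of Oseledets in discrete time \cite{Oseledec1968, Viana2014} and to transport it along the flow in the continuous-time case. When $\mathfrak{T}=\Z$ and $\Omega=\Theta$, the assertion \emph{is} Oseledets' theorem for the $\SL{d}$-valued cocycle $\Phi$ over the ergodic homeomorphism $(\Theta,\phi,\mu)$, and the Furstenberg--Kesten limit \eqref{eq:FK} is then the top exponent $\lambda_1$. So assume $\mathfrak{T}=\R$ and $\Omega=\Theta_1$. The slice $\Theta\times\{0\}\subset\Theta_1$ is a global cross-section of the suspension flow $\phi^t$ with constant return time $1$ and first-return map $\phi$; in particular the return system is the ergodic system $(\Theta,\phi,\mu)$. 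Let $B_\theta:=\Phi^1_{(\theta,0)}\in\SL{d}$ be the discrete-time cocycle that $\Phi$ induces on this cross-section; the cocycle property together with $\phi^1(\theta,0)=(\phi\theta,0)$ gives $B^n_\theta=\Phi^n_{(\theta,0)}$ for all $n\in\Z$.

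First I would apply the discrete theorem to $B$ over $(\Theta,\phi,\mu)$: there are reals $\lambda_1>\dots>\lambda_k$, a fully $\phi$-invariant set $\Theta'\subset\Theta$ with $\mu(\Theta')=1$, and a measurable splitting $\R^d=F_1(\theta)\oplus\dots\oplus F_k(\theta)$ over $\Theta'$ with $B^n_\theta F_i(\theta)=F_i(\phi^n\theta)$ and $\lim_{n\to\pm\infty}\tfrac1n\log\norm{B^n_\theta x}=\lambda_i$ for every $0\ne x\in F_i(\theta)$. I then define $\Theta_1':=\{(\theta,r)\in\Theta_1 : \theta\in\Theta'\}$ and, for $(\theta,r)\in\Theta_1'$,
\[
  E_i(\theta,r):=\Phi^r_{(\theta,0)}F_i(\theta).
\]
This is well defined on the mapping torus because at the identification $(\theta,1)\sim(\phi\theta,0)$ one has $\Phi^1_{(\theta,0)}F_i(\theta)=B_\theta F_i(\theta)=F_i(\phi\theta)$. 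The set $\Theta_1'$ has full $(\mu\otimes\lambda)$-measure and is $\phi^t$-invariant because $\Theta'$ is $\phi$-invariant, and $(\theta,r)\mapsto E_i(\theta,r)$ is measurable since $F_i$ is measurable and $\Phi$ is continuous; as $\Phi^r_{(\theta,0)}$ is invertible, the $E_i(\theta,r)$ still form a direct-sum decomposition of $\R^d$.

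The remaining properties are bookkeeping with the cocycle identity. Writing $\phi^t(\theta,r)=(\phi^m\theta,r')$ with $m=\lfloor r+t\rfloor$, $r'=r+t-m$, the cocycle property gives $\Phi^t_{(\theta,r)}\Phi^r_{(\theta,0)}=\Phi^{t+r}_{(\theta,0)}=\Phi^{r'}_{(\phi^m\theta,0)}B^m_\theta$, so that $\Phi^t_{(\theta,r)}E_i(\theta,r)=\Phi^{r'}_{(\phi^m\theta,0)}F_i(\phi^m\theta)=E_i(\phi^t(\theta,r))$, which is (i). For (ii), take $0\ne x=\Phi^r_{(\theta,0)}y$ with $y\in F_i(\theta)$; then $\Phi^t_{(\theta,r)}x=\Phi^{s}_{(\phi^n\theta,0)}B^n_\theta y$ with $n=\lfloor t+r\rfloor$ and $s=t+r-n\in[0,1)$. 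Here I would use the bound $C:=\sup\{\max(\norm{\Phi^\sigma_\omega},\norm{\Phi^{-\sigma}_\omega}) : \omega\in\Theta_1,\ \sigma\in[0,1]\}$, finite by compactness of $\Theta_1$ and continuity of $\Phi$, which yields $C^{-1}\norm{B^n_\theta y}\le\norm{\Phi^t_{(\theta,r)}x}\le C\norm{B^n_\theta y}$; since $n/t\to1$ as $t\to\pm\infty$ this gives $\lambda((\theta,r),x)=\lambda^-((\theta,r),x)=\lim_{n\to\pm\infty}\tfrac1n\log\norm{B^n_\theta y}=\lambda_i$. The same interpolation applied to $\norm{\Phi^t_{(\theta,0)}}$ versus $\norm{B^n_\theta}$ identifies the Furstenberg--Kesten limit \eqref{eq:FK} of $\Phi$ with $\lambda_1$.

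The genuine content, and the step I expect to be the main obstacle, is the discrete-time input invoked above --- specifically the existence, measurability and \emph{exactness} of the Oseledets splitting, i.e.~upgrading the $\limsup$ defining $\overline{\lambda}$ to a two-sided limit. Rather than reproduce it I would cite \cite{Oseledec1968, Viana2014}; the standard route is: apply Kingman's subadditive ergodic theorem to the subadditive cocycles $n\mapsto\log\norm{\Lambda^p B^n_\theta}$ (with $\Lambda^p$ the $p$-th exterior power) to obtain the exponents with multiplicities and the monotonicity of successive differences; extract from the forward dynamics a decreasing filtration realising the values of $\overline{\lambda}(\theta,\cdot)$; apply the same construction to the inverse cocycle $\theta\mapsto B^{-1}_{\phi^{-1}\theta}$ to obtain an increasing filtration adapted to the backward exponents; and intersect the two filtrations, whose transversality forces the exponents to be exact on the intersections $E_i=F^+_i\cap F^-_i$. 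Everything in the continuous-time reduction above is then routine given this input.
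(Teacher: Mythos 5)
Your argument is correct, but it is worth noting that the paper itself offers no proof of this statement: it is quoted as the classical multiplicative ergodic theorem, stated uniformly for $\mathfrak{T}=\Z$ and $\mathfrak{T}=\R$, with a pointer to the literature (cf.~\cite{Viana2014}), plus the remark that the integrability hypothesis $\log\norm{\Phi^{\pm1}}\in L^1(\Omega)$ is automatic for continuous cocycles on a compact base. What you supply beyond that citation is a genuine (and standard) reduction of the continuous-time case to the discrete-time one over the cross-section $\Theta\times\{0\}$: apply the two-sided discrete MET to $B_\theta=\Phi^1_{(\theta,0)}$, transport the splitting by $E_i(\theta,r)=\Phi^r_{(\theta,0)}F_i(\theta)$, and interpolate between integer and real times using the compactness bound $C=\sup\{\max(\norm{\Phi^\sigma_\omega},\norm{\Phi^{-\sigma}_\omega}):\omega\in\Theta_1,\ \sigma\in[0,1]\}$. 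All the steps check out: well-definedness on the mapping torus follows from $B_\theta F_i(\theta)=F_i(\phi\theta)$ together with $\phi$-invariance of $\Theta'$, the cocycle bookkeeping for (i) is right, and the sandwich argument with $n/t\to1$ correctly upgrades the discrete two-sided limits to the continuous-time exact forward and backward exponents in (ii). Your route is also consistent with how the authors themselves handle the discrete/continuous correspondence elsewhere: the identity $B^n_\theta=\Phi^n_{(\theta,0)}$ is exactly their computation \eqref{eq:disc_cont_comp_1}--\eqref{eq:disc_cont_comp_2}, and the ``drop the bounded correction term'' interpolation reappears in \eqref{eq:Herman_extension_FK}. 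So relative to the paper your proposal is more detailed rather than different in substance; the only part you (legitimately) leave to citation, the existence, measurability and exactness of the discrete Oseledets splitting, is precisely the part the paper also takes from the literature.
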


The MET is usually stated for measurable cocycles without the assumption of continuity. In that case, one needs the additional assumption
\begin{equation}
    \log\norm{\Phi^{\pm 1}}\in L^1(\Omega).
\end{equation}
Since continuous functions on a compact space, like $\Omega$, are bounded, this condition is fulfilled for continuous cocycles.

\begin{revision}
    Lastly, we define the concept of (non-)uniform hyperbolicity. A cocycle in $\SL{d}$ is called \emph{hyperbolic}, if $\lambda_1=\Lambda^*>0$. 
    In the following definition, continuity of subspaces is meant with respect to the Grassmanian topology (cf.~e.g.~\cite[Example 1.15]{lee2012smooth}).
    \begin{definition}\label{def:uniform_hyper}
        A cocycle is called \emph{uniformly hyperbolic} if there are constants $C>0$, $\lambda>0$ and for each $\omega \in \Omega$ there is a continuous splitting 
        \begin{equation}
            \R^d = U(\omega) \oplus S(\omega),
        \end{equation}
         with the following properties
        \begin{enumerate}[(i)]
            \item $\Phi^t_\omega U(\omega)= U(\phi^t \omega)$ and $\Phi^t_\omega S(\omega)= S(\phi^t \omega), \quad t\in \mathfrak{T}$;
            \item $\norm{\Phi^t_\omega u} \geq C e^{\lambda t}, \qquad u\in U(\omega), \: t\in \mathfrak{T}_{\geq 0}$;
            \item $\norm{\Phi^t_\omega s} \leq C^{-1} e^{-\lambda t}, \qquad s\in S(\omega), \: t\in \mathfrak{T}_{\geq 0}$.
        \end{enumerate}
    \end{definition}
    A cocycle that is hyperbolic but not uniformly hyperbolic is called \emph{non-uniformly hyperbolic}. 

    For $d=2$, the (non-)uniform hyperbolicity of a cocycle in $\SL{2}$ is directly linked to its Oseletets splitting. If $\lambda_1>0$ (and thereby $\lambda_2=-\lambda_1 < 0$), Theorem \ref{thm:MET} provides a splitting $\R^2= E_1(\omega) \oplus E_2(\omega)$ for $\mu$-a.e.~$\omega \in \Omega$. If the cocycle is uniformly hyperbolic, the stable and unstable bundles $U(\omega)$ and $S(\omega)$ are given by $E_1(\omega)$ and $E_2(\omega)$. Hence, a hyperbolic cocycle in $\SL{2}$ is uniformly hyperbolic if and only if the Oseledets splitting is defined everywhere, it is continuous and satisfy the uniform growth/decay condition of Definition \ref{def:uniform_hyper}. In the case of uniquely ergodic driving, the last condition is redundant.
\end{revision}

\subsection{Homotopies}\label{sec:homotopies}
In the following section, we need some tools from topology which we briefly recap. For a formal introduction to homotopies and the fundamental group we refer to \cite{hatcher2000algebraic}. 

Given two topological spaces $\Theta$ and $M$, we study continuous maps $\psi\in C(\Theta, M)$. Given two maps $\psi_0, \psi_1 \in C(\Theta,M)$, a \emph{homotopy} $h$ from $\psi_0$ to $\psi_1$ is a continuous map $h:\Theta \times [0,1] \to M$ with
\begin{align}
\begin{split}
    h(\theta, 0) &= \psi_0(\theta),\\
    h(\theta, 1) &= \psi_1(\theta).
\end{split}
\end{align}
If such a map exists, we say that $\psi_0$ and $\psi_1$ are \emph{homotopic}, written $\psi_0 \simeq \psi_1$. When $M$ is a smooth manifold and $h$ is smooth, we call $h$ a \emph{smooth homotopy}. If $h$ is only smooth in the second variable, i.e.~$ t\mapsto h(\theta, t)$ is smooth for every $\theta\in \Theta$, we say that $h$ is \emph{smooth in $t$}. A homotopy can be expressed as a map $h:[0,1]\to C(\Theta, M)$ with $h(0)=\psi_0$ and $h(1)=\psi_1$. \begin{revision}When $\Theta$ is a compact metric space and $M$ is a metric space, this map is continuous with respect to the supremum norm on $C(\Theta, M)$.\end{revision} The homotopy relation $\simeq$ defines an equivalence relation. For $\psi\in C(\Theta, M)$ we denote its equivalence class by $[\psi]$.

Let us assume that $M$ is connected. Let $x_0\in M$ be any point and consider the constant map $\psi_0 \in C(\Theta, M)$ defined by $\psi_0(\theta)=x_0$. We call the equivalence class $[\psi_0]$ the \emph{null-class}. Note that any constant map lies in this equivalence class. We call maps $\psi\in [\psi_0]$ that are elements of the null-class \emph{nullhomotopic}. Hence, a map $\psi\in C(\Theta,M)$ is nullhomotopic if and only if it is homotopic to a constant map.

A special case, which is the most well-known application of homotopy, is $\Theta= S^1$. In that case, one can equip the set of equivalence classes with a group structure forming the so-called \emph{fundamental group} of $M$. \begin{revision}For $\Theta=S^n$, for any $n\in \N$, the equivalence classes can be equipped with a group structure as well, yielding the higher order homotopy groups of $M$ (cf.~e.g.~\cite{hatcher2000algebraic}).\end{revision} We call continuous maps $\psi\in C(S^1, M)$ \emph{loops}. Assume that $M$ is connected and fix a basepoint $x_0\in M$. For each equivalence class $[\psi]$, we can choose a representative $\psi^* \in [\psi]$ which satisfies $\psi^*(0) = x_0$. This allows us to define a group operation on the space of equivalence classes via concatenation. For two equivalence classes $[\psi_1]$, $[\psi_2]$, define the group operation by
\begin{equation}\label{eq:concatenation}
    [\psi_1]\circ [\psi_2] = [\psi], \qquad \psi(\theta) = \begin{cases}
        \psi_1(2\theta), &\theta \in [0, \frac{1}{2}), \\
        \psi_2(2\theta - 1), \quad & \theta \in [\frac{1}{2}, 1].
    \end{cases}
\end{equation}
This operation is indeed a group operation of the space of equivalence classes with the null-class as the neutral element. The resulting group is called the \emph{fundamental group} of $M$ and is denoted by $\pi_1(M)$.

\section{Continuous-time extensions}\label{sec:cont_time_extensions}
\subsection{General driving}\label{sec:general driving}

Given a continuous-time cocycle $\Phi$ over the driving $\phi^t:\Theta_1\to \Theta_1$, the time-1 map defines a discrete-time cocycle $A$ over the driving $\phi:\Theta \to \Theta$ via
\begin{equation}
    A_\theta = \Phi^1_{(\theta, 0)}.
\end{equation}
Due to the identity $\phi^n(\theta, 0)=(\phi^n \theta, 0)$, the $n$-step map of $A$ for $n\in \N$ is given by
\begin{align}\label{eq:disc_cont_comp_1}
\begin{split}
    A_{\theta}^n &= A_{\phi^{n-1} \theta}\cdot\hdots \cdot A_{\phi \theta}\cdot A_{\theta}\\
    &= \Phi^1_{\phi^{n-1}(\theta, 0)}\cdot \hdots \cdot \Phi^1_{\phi^1(\theta, 0)} \cdot \Phi^1_{(\theta, 0)} \\
    &= \Phi^n_{(\theta, 0)}.
\end{split}
\end{align}
For negative time, we can use the cocycle property of $\Phi$ and $A$, in particular identity (\ref{eq:negative_time_identity}), to compute
\begin{equation}\label{eq:disc_cont_comp_2}
    A^{-n}_{\theta} = (A^n_{\phi^{-n}\theta})^{-1} = (\Phi^n_{\phi^{-n}(\theta, 0)})^{-1} = \Phi^{-n}_{(\theta, 0)}.
\end{equation}
Hence, every continuous-time cocycle defines a discrete-time cocycle. \begin{revision}The question arises which discrete-time cocycles can occur as the time-1 maps of a continuous-time cocycle. Equivalently, for which discrete-time cocycles $A$ over $\Theta$ can we find a continuous-time cocycle $\Phi$ over $\Theta_1$ that has $A$ as its time-1 map?\end{revision}
\begin{definition}
    \label{def:cano}
    Given a discrete-time cocycle $A$ over the driving $\phi:\Theta \to \Theta$, we say that $\Phi$, a continuous-time cocycle over the suspension flow $\phi^t:\Theta_1 \to \Theta_1$, is a \emph{canonical extension} of $A$ if
    \begin{equation}
        \Phi^n_{(\theta, 0)} = A^n_{\theta}, \quad \forall n\in \Z,
    \end{equation}
    for all $\theta \in \Theta$.
\end{definition}
\begin{remark}\label{rem:n_1_suffices}
    The two computations (\ref{eq:disc_cont_comp_1}) and (\ref{eq:disc_cont_comp_2}) show that, in order to find a canonical extension $\Phi$ of $A$, it suffices to find a continuous-time cocycle $\Phi$ that satisfies $\Phi^1_{(\theta, 0)}=A_{\theta}$ for all $\theta\in \Theta$.
    Since the flows $\Phi^t_{(\theta, 0)}$, for $t\in (0,1)$, can be rescaled in time arbitrarily, canonical extensions are, given they exist, not unique.
\end{remark}

The next theorem characterises precisely under which condition a canonical extension of a discrete-time cocycle exists.
\begin{theorem}\label{thm:nullhomotopic_extension}
    A discrete-time cocycle $A$ has a canonical extension if and only if the map
    \begin{equation}
        A: \Theta \to \SL{d},
    \end{equation}
    is nullhomotopic.
\end{theorem}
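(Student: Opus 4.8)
The plan is to prove both implications. The implication ``a natural extension exists $\Rightarrow$ $A$ is nullhomotopic'' is immediate, so the content lies in the reverse implication, whose proof is a construction. For the easy direction, suppose $\Phi$ is a natural extension of $A$. Then the map $h:\Theta\times[0,1]\to\SL d$, $h(\theta,t):=\Phi^t_{(\theta,0)}$, is continuous (by continuity of $\Phi:\Theta_1\times\R\to\SL d$ and of $\theta\mapsto(\theta,0)\in\Theta_1$), with $h(\theta,0)=\Phi^0_{(\theta,0)}=\Id$ and $h(\theta,1)=\Phi^1_{(\theta,0)}=A_\theta$. Hence $A$ is homotopic to the constant map $\theta\mapsto\Id$, i.e.\ nullhomotopic.

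For the reverse direction, by Remark~\ref{rem:n_1_suffices} it suffices to construct a continuous-time cocycle $\Phi$ over the suspension flow with $\Phi^1_{(\theta,0)}=A_\theta$, and I would do this by prescribing a suitable generator $G:\Theta_1\to\mathfrak{sl}_d(\R)$. First, since $A$ is nullhomotopic and $\SL d$ is path-connected, I would fix a continuous homotopy $h:\Theta\times[0,1]\to\SL d$ with $h(\cdot,0)\equiv\Id$ and $h(\cdot,1)=A$ (concatenate a nullhomotopy of $A$ with a path joining its constant value to $\Id$). The crucial preparatory step is to upgrade $h$ to a homotopy $\gamma$ that is in addition $C^1$ in $t$, has $\partial_t\gamma$ jointly continuous, and satisfies $\partial_t\gamma_\theta(0)=\partial_t\gamma_\theta(1)=0$ for all $\theta$, \emph{without altering the two boundary slices}. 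I would do this in three moves: (i) reparametrise $t$ via a smooth nondecreasing $\beta:[0,1]\to[0,1]$ with $\beta\equiv 0$ near $0$ and $\beta\equiv 1$ near $1$, making the homotopy locally constant in $t$ near both endpoints; (ii) extend it in $t$ by the constant boundary values and mollify in $t$ with a smooth kernel of small support, obtaining a map that is $C^\infty$ in $t$ with jointly continuous $t$-derivatives, is still locally constant in $t$ near $t=0,1$, and—since it converges uniformly to an $\SL d$-valued map on a compact domain—takes values in $\textup{GL}_d^+(\R)$ once the kernel is small enough; (iii) compose with the smooth retraction $\textup{GL}_d^+(\R)\to\SL d$, $M\mapsto(\det M)^{-1/d}M$, which returns to $\SL d$ while fixing the (still unchanged) slices $\Id$ and $A$.

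Given such a $\gamma$, I would set $G(\theta,r):=\partial_t\gamma_\theta(r)\,\gamma_\theta(r)^{-1}$ on $\Theta\times[0,1]$. Since $\det\gamma_\theta\equiv 1$, $G$ takes values in $\mathfrak{sl}_d(\R)$ and is continuous; and because $\partial_t\gamma_\theta(0)=\partial_t\gamma_\theta(1)=0$ it satisfies the gluing condition $G(\theta,1)=0=G(\phi\theta,0)$, so it descends to a continuous map $G:\Theta_1\to\mathfrak{sl}_d(\R)$ by the universal property of the quotient topology defining the mapping torus. Let $\Phi$ be the cocycle generated by $G$. To verify $\Phi^1_{(\theta,0)}=A_\theta$: for $t\in[0,1)$ one has $\phi^t(\theta,0)=(\theta,t)$, and the gluing condition ensures that $t\mapsto G(\phi^t(\theta,0))$ coincides with $t\mapsto G(\theta,t)=\partial_t\gamma_\theta(t)\gamma_\theta(t)^{-1}$ on the \emph{closed} interval $[0,1]$; hence both $t\mapsto\Phi^t_{(\theta,0)}$ and $t\mapsto\gamma_\theta(t)$ solve the same linear ODE $\dot X=G(\theta,t)X$, $X(0)=\Id$, on $[0,1]$, and uniqueness gives $\Phi^1_{(\theta,0)}=\gamma_\theta(1)=A_\theta$. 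By Remark~\ref{rem:n_1_suffices}, $\Phi$ is then a natural extension of $A$.

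The step I expect to be the main obstacle is the preparation of $\gamma$: one must turn a merely continuous homotopy into one that is $C^1$ in $t$ with vanishing $t$-derivative on both boundary slices while keeping those slices exactly equal to $\Id$ and to $A$—and $A$ need not be smooth in $\theta$, so no global smoothing of the map is available. The reparametrise--mollify--retract procedure is designed precisely for this; once it is in place, the remainder is routine bookkeeping with the cocycle property, the quotient topology of the mapping torus, and uniqueness of solutions of linear ODEs.
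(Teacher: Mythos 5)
Your proposal is correct, and its overall skeleton matches the paper's proof: the forward direction via $h(\theta,t)=\Phi^t_{(\theta,0)}$ is identical, and the reverse direction likewise reduces (via Remark~\ref{rem:n_1_suffices}) to producing a homotopy from $\Id$ to $A$ that is $C^1$ in $t$, has jointly continuous $t$-derivative vanishing on both boundary slices, then setting $G=\partial_t h\cdot h^{-1}$, checking it descends continuously to $\Theta_1$, and invoking uniqueness for linear ODEs. Where you genuinely diverge is in how that regularised homotopy is manufactured. The paper smooths in \emph{both} variables: it invokes Whitney's Approximation Theorem to replace $A$ by a smooth map $B$ (with an explicit straight-line-plus-tubular-neighbourhood-retraction homotopy $H_2$), uses the smooth-homotopy theorem to connect $\Id$ to $B$, concatenates, and finally reparametrises by a flattening function $\sigma$ to kill the endpoint derivatives. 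You instead regularise only in $t$: flatten near the endpoints by a reparametrisation that is locally constant there, mollify in $t$ (which then preserves the slices $\Id$ and $A$ exactly and keeps the values in $\textup{GL}_d^+(\R)$ by uniform closeness on a compact domain), and return to $\SL{d}$ via the explicit retraction $M\mapsto(\det M)^{-1/d}M$. Your route is more elementary and self-contained---no Whitney theorem, no abstract tubular neighbourhood, and no concatenation of two homotopies whose $t$-derivatives must be matched (a point the paper glosses over and which its final $\sigma$-reparametrisation only repairs at $t=0,1$, not at the junction; your endpoint-flattening-plus-mollification handles all of $[0,1]$ at once). What the paper's route buys in exchange is a homotopy that can be taken smooth in $t$ via off-the-shelf citations, and a construction that generalises verbatim to targets where no explicit global retraction like the determinant rescaling is available; for $\SL{d}$ specifically, your argument is a clean simplification. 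The only points to state carefully are that the mollification kernel's support must be smaller than the constancy buffer (so the slices and the vanishing endpoint derivatives are exactly preserved) and that convolution in $t$ of a jointly continuous map yields jointly continuous $t$-derivatives---both of which you have flagged and both of which are routine.
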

\begin{proof}
    ``$\Rightarrow$'' Assume that $A$ has a canonical extension $\Phi$. Define a map by
    \begin{nalign}
        h:\Theta\times [0,1] &\to \SL{d},\\
        (\theta, t) \hspace{6mm} &\mapsto \Phi^t_{(\theta, 0)}.
    \end{nalign}
    Since $\Phi$ is continuous in both variables, so is $h$. Hence, $h$ is a homotopy with
    \begin{equation}
        h(\theta, 0) = \Phi^0_{(\theta, 0)} = \Id, \qquad h(\theta, 1) = \Phi^1_{(\theta, 0)} = A_{\theta}.
    \end{equation}
    This shows that $A$ is nullhomotopic.\\

    ``$\Leftarrow$'' Assume that $A$ is nullhomotopic. We show that there is a homotopy $h$ from $\Id$ to $A$, with the following properties:
    \begin{enumerate}
        \item $h$ is smooth in $t$;
        \item $\partial_t h(\theta, t)$ is continuous in both variables;
        \item $\partial_t h(\theta,t) \big\vert_{t=0,1} = 0$.
    \end{enumerate}
    To show the existence of such a homotopy, we use Whitney's Approximation Theorem \cite[Theorem 6.26]{lee2012smooth}. The theorem implies that any continuous map $A:\Theta \to \SL{d}$ is homotopic to a smooth map $B\in C^\infty(\Theta ,\SL{d})$ via a homotopy $H_2$. 
    To have an explicit construction of $H_2$, we briefly sketch Lee's proof in \cite{lee2012smooth}:
    $\SL{d}$ is a smooth submanifold of $\R^{d\times d}$  without boundary. Let $U$ be a tubular neighbourhood of $\SL{d}$ in $\R^{d\times d}$ and let $r:U\to \SL{d}$ be a smooth retraction. By Whitney's Approximation Theorem for Functions \cite[Theorem 6.21]{lee2012smooth} there is a smooth map $A^*\in C^\infty(\Theta, U)$ that is sufficiently close to $A$ such that $(1-t)A(\theta) + tA^*(\theta) \in U$ for all $\theta$ and $t\in [0,1]$. Define $B=r\circ A^*\in C^\infty(\Theta ,\SL{d})$. The homotopy $H_2$ from $B$ to $A$ is given by
    \begin{equation}
        H_2(\theta, t) = r\left((1-t)A^*(\theta) + tA(\theta)\right).
    \end{equation}
    As a combination of continuous maps, $H_2$ is continuous. Since $r$ is smooth, $H_2$ is smooth in $t$. The $t$-derivative of $H_2$ is given by
    \begin{equation}
        \partial_t H_2 (\theta, t) = r'((1-t)A^*(\theta) + tA(\theta)) \cdot (A(\theta) - A^*(\theta)).
    \end{equation}
    This shows that the $t$-derivative of $H_2$ is continuous in both variables.
    
    Naturally, $B$ is still nullhomotopic. Since both $B$ and $\Id$ are smooth maps from $\Theta$ to $\SL{d}$, \cite[Theorem 6.29]{lee2012smooth} states that there is a smooth homotopy $H_1$ from $\Id$ to~$B$. Concatenating $H_1$ and $H_2$ yields a homotopy $H$ from $\Id$ to $A$ that is smooth in $t$. Since the $t$-derivatives of $H_1$ and $H_2$ are continuous in both variables, so is the $t$-derivative of $H$.
    
    Let $\sigma:[0,1] \to [0,1]$ be a smooth function with $\sigma(0) = 0$, $\sigma(1)= 1$ and $\sigma'(0)=\sigma'(1)=0$. We define a homotopy $h$ by
    \begin{equation}
        h(\theta, t) = H(\theta, \sigma(t)).
    \end{equation}
    By construction, $h$ is a homotopy from $\Id$ to $A$. Since $\sigma$ is smooth and $H$ is smooth in $t$, the homotopy $h$ is smooth in $t$. We compute the $t$-derivative of $h$ as
    \begin{equation}\label{eq:derivaitve_sigma}
        \partial_t h(\theta,t) = \sigma'(t)\partial_t H(\theta, \sigma(t)).
    \end{equation}
    This shows that the $t$-derivative of $h$ is continuous in both variables and is equal to zero for $t=0,1$. Hence, $h$ is a homotopy from $\Id$ to $A$ with the desired properties.\\

    Based on the homotopy $h$, we define a generator $G: \Theta_1 \to  \mathfrak{sl}_d(\R)$. Define
    \begin{equation}\label{eq:G_from_homotopy}
        G(\theta, r) := \partial_r h(\theta, r) \cdot h(\theta, r)^{-1},
    \end{equation}
    where $h(\theta, r)^{-1}$ is the inverse to $h(\theta, r)  \in \SL{d}$.
    To see that $G(\theta, r)\in  \mathfrak{sl}_d(\R)$ we rewrite (\ref{eq:G_from_homotopy}) as
    \begin{equation}\label{eq:G_from_homotopy_ODE}
        \partial_r h(\theta, r) = G(\theta, r) \cdot h(\theta, r).
    \end{equation}
    Since $h$ maps to $\SL{d}$, Liouville's formula guarantees $G(\theta, r)\in  \mathfrak{sl}_d(\R)$. 
    From the definition~\eqref{eq:G_from_homotopy}, we get that $G$ is continuous on $\Theta \times (0,1) \subset \Theta_1$ by continuity of $\partial_r h(\theta, r)$ and $h(\theta, r)^{-1}$. We check that $G$ is also continuous at $\Theta \times \{0,1\}\subset \Theta_1$. For $(\theta, r)\in \Theta_1$ approaching $\Theta \times \{0,1\}$, the value $r$ is approaching $0$ or $1$. From (\ref{eq:G_from_homotopy}) and (\ref{eq:derivaitve_sigma}) we can derive that $G(\theta, r) \to 0$ as $r$ approaches $0$ or $1$ since $\sigma'(r)\to 0$ while all other terms are bounded. Hence, $G$ is continuous on all of $\Theta_1$.
    
    Let $\Phi$ be the continuous-time cocycle generated by $G$. In particular, for fixed $\theta \in \Theta$, the map $t\mapsto \Phi_{(\theta, 0)}^t$ is the unique solution to the ODE, recall (\ref{eq:cont_time_cocycle_ODE}),
    \begin{equation}\label{eq:ODE_recap}
        \Phi^0_{(\theta, 0)}=\Id, \qquad \partial_t \Phi^t_{(\theta, 0)} = G(\phi^t(\theta, 0)) \cdot \Phi^t_{(\theta, 0)},
    \end{equation}
    For $t\in [0,1]$, we find $G(\phi^t(\theta, 0)) = G(\theta, t)$. Hence, (\ref{eq:G_from_homotopy_ODE}) together with the fact that $h(\theta, 0)=\Id$ shows that $t \mapsto h(\theta, t)$ solves (\ref{eq:ODE_recap}) as well for $t\in [0,1]$. By uniqueness, we find 
    \begin{equation}
        \Phi^t_{(\theta, 0)} = h(\theta, t),
    \end{equation}
    for all $t\in [0,1]$. In particular, we find
    \begin{equation}
        \Phi^1_{(\theta, 0)} = h(\theta, 1) = A_{\theta}.
    \end{equation}
    Based on Remark \ref{rem:n_1_suffices}, this shows that $\Phi$ is a canonical extension of $A$, which completes the proof.
\end{proof}

\begin{remark}
    Given any nullhomotopic cocycle $A\in C(\Theta, \SL{d})$, the existence of a tubular neighborhood $U$ and a smooth retraction $r:U\to \SL{d}$ shows that any sufficiently small perturbation of $A$ in $C(\Theta, \SL{d})$ is still nullhomotopic. With Theorem~\ref{thm:nullhomotopic_extension} this implies that the existence of a canonical extension is a robust property under small perturbations in~$C(\Theta, \SL{d})$. Conversely, a cocycle not being nullhomotopic and not having a canonical extension is robust under perturbations as well.
\end{remark}

\subsection{(Quasi-)periodic driving}
A particularly interesting case to study the existence of canonical extensions to continuous time of discrete-time cocycles is that of (quasi-)periodic driving over a circle or torus, respectively. The unit circle $S^1$ will always be identified with the interval $[0,1]$ with connected endpoints. A circle rotation is defined by the transformation
\begin{align}
\begin{split}
    \phi: S^1 &\to S^1 \\
    \theta \hspace{2mm}&\mapsto \theta + \alpha \mod 1
\end{split}
\end{align}
where $\alpha \in \R$ is the rotation constant. It is well-known that for $\alpha \in \R\setminus \mathds{Q}$, the circle rotation is uniquely ergodic with respect to Lebesgue measure on $S^1$. In that case we call $\phi$ an irrational circle rotation or quasi-periodic driving. In the case that $\alpha = \frac{p}{q} \in \mathds{Q}$, the map $\phi$ is periodic with period $q$ and we call $\phi$ periodic driving. There are infinitely many ergodic measures for periodic $\phi$, each of which consists of $q$ equally weighted delta measures in discrete points which form an orbit of~$\phi$. For the rest of this section, we work with general $\alpha \in \R$.

The flow $\phi^t$ on the mapping torus $\Theta_1$ of $S^1$ constructed from a circle rotation $\phi$ by $\alpha$ is given by
\begin{nalign}
    \phi^t: \hspace{3mm}\Theta_1 &\to \Theta_1 \\
    (\theta, r) &\mapsto (\theta + \lfloor r+t \rfloor \alpha, r+t), \mod 1.
\end{nalign}
We call $\phi^t$ the \emph{suspension flow by $\alpha$}. The mapping torus $\Theta_1$ is homeomorphic to the 2-torus $\tor = S^1 \times S^1$. Hence, the suspension flow by $\alpha$ is simply a coordinate transform of a torus rotation by $\alpha$, also known as \emph{Kronecker flow} or \emph{quasi-periodic flow},
\begin{align}
\begin{split}
    \phi^t: \hspace{3mm}\tor &\to \tor \\
    (\theta, r) &\mapsto (\theta + \alpha t, r + t)
\end{split}
\end{align}
For $\alpha \in \R \setminus \mathds{Q}$, both of these flows are uniquely ergodic with respect to Lebesgue measure on~$\Theta_1$ and $\tor$ respectively. While the flow over $\tor$ might be easier to grasp, the suspension flow is easier to work with and aligns with the previous section on continuous-time extensions. Hence, we will work with the latter and leave the torus rotation as a remark. 

The key advantage of specifically considering $\Theta=S^1$ in the context of continuous-time extensions is that the homotopy classes of $C(S^1, \SL{d})$ are well studied (cf.\ Section~\ref{sec:homotopies}).
In particular, the homotopy classes form the fundamental group $\pi_1(\SL{d})$. The fundamental group of $\SL{d}$ can be characterised by the following isomorphic relationships.
\begin{equation}
    \label{eq:SL_fundgroups}
    \pi_1(\SL{2}) \cong \Z, \qquad \pi_1(\SL{d}) \cong \Z/2\Z, \quad d\geq 3,
\end{equation}
see \cite[Chapter 1]{hall2013lie}. Since a map $S^1\to \SL{d}$ is nullhomotopic if and only if it is homotopic to a constant map, it suffices to check whether a continuous, discrete-time cocycle $A:S^1\to \SL{d}$ is homotopic to the constant identity map $\Id:S^1 \to \SL{d}$ to determine whether $A$ is nullhomotopic.

\begin{revision}While Theorem \ref{thm:nullhomotopic_extension} shows that discrete-time cocycles which are not nullhomotopic cannot be canonically extended to continuous time, a corollary of Theorem \ref{thm:nullhomotopic_extension} shows that for any discrete-time cocycle $A$ in at least 3 dimensions there is a continuous-time cocycle extension $\Phi$ upon halving the rotation constant $\alpha$.\end{revision} 
\begin{corollary}\label{crl:n_geq_3_extension}
    Let $d \geq 3$. Let $A$ be a discrete-time cocycle in $\SL{d}$ over a circle rotation by $\alpha$. There is a continuous-time cocycle $\Phi$ over the suspension flow by $\frac{1}{2}\alpha$ that satisfies
    \begin{equation}
        \Phi^n_{(\theta, 0)} = A^n_{2\theta}, \quad\forall n\in \Z,
    \end{equation}
    for all $\theta \in S^1$.
\end{corollary}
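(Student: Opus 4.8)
The plan is to reduce the statement to Theorem~\ref{thm:nullhomotopic_extension} by slowing down the base. Define the continuous map
\begin{equation}
    \tilde{A} : S^1 \to \SL{d}, \qquad \tilde{A}_\theta := A_{2\theta},
\end{equation}
where $2\theta$ is taken modulo $1$; since $\theta \mapsto 2\theta$ descends to a continuous self-map of $S^1 = \R/\Z$, this $\tilde{A}$ is a well-defined continuous discrete-time cocycle over the circle rotation $\psi:\theta \mapsto \theta + \tfrac12\alpha$. First I would verify, straight from the definition~\eqref{eq:n_step_cocycle} of the $n$-step map, that the $\psi$-orbit $\theta, \theta+\tfrac12\alpha, \theta+\alpha,\dots$ is carried by multiplication by $2$ onto the orbit $2\theta, 2\theta+\alpha, 2\theta+2\alpha,\dots$ of $2\theta$ under rotation by $\alpha$, so that $\tilde{A}^n_\theta = A^n_{2\theta}$ for every $n\in\Z$ and $\theta\in S^1$ (negative $n$ following from~\eqref{eq:negative_time_identity}).

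The heart of the argument is to show that $\tilde{A}$ is nullhomotopic. Writing $m:S^1\to S^1$ for the degree-$2$ covering $\theta\mapsto 2\theta$, we have $\tilde{A} = A\circ m$, hence on fundamental groups $[\tilde{A}] = A_*\big(m_*(\gamma)\big) = A_*(2\gamma) = 2\,[A]$ in $\pi_1(\SL{d})$, where $\gamma$ generates $\pi_1(S^1)\cong\Z$. Since $d\ge 3$, the identification~\eqref{eq:SL_fundgroups} gives $\pi_1(\SL{d})\cong\Z/2\Z$, so $2[A]=0$ and $\tilde{A}$ is homotopic to a constant map. This is exactly the place where the hypothesis $d\ge 3$ enters, and it is the only step that is not a direct unwinding of definitions; for $d=2$ the class $2[A]\in\Z$ need not vanish, which is why an auxiliary dimension reappears there (cf.~Corollary~\ref{crl:n_2_extension}).

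Finally I would apply Theorem~\ref{thm:nullhomotopic_extension} to the nullhomotopic cocycle $\tilde{A}$ over the driving $\psi$. It yields a continuous-time cocycle $\Phi$ over the suspension flow associated with $\psi$ — that is, the suspension flow by $\tfrac12\alpha$ — satisfying $\Phi^n_{(\theta,0)} = \tilde{A}^n_\theta$ for all $n\in\Z$ and $\theta\in S^1$. Combined with the identity $\tilde{A}^n_\theta = A^n_{2\theta}$ from the first step, this gives $\Phi^n_{(\theta,0)} = A^n_{2\theta}$, which is the claim. The one genuinely non-formal point is the homotopy bookkeeping in the second paragraph, namely that precomposition with a degree-$2$ self-map of the circle doubles the class in $\pi_1(\SL{d})$ and therefore annihilates it modulo $2$; everything else is routine.
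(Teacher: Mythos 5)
Your proposal is correct and follows essentially the same route as the paper: the paper also defines $B_\theta = A_{2\theta}$ over the rotation by $\tfrac12\alpha$, checks $B^n_\theta = A^n_{2\theta}$, notes that as a loop $B$ is the concatenation of $A$ with itself so $[B]=[A]\circ[A]$ is trivial in $\pi_1(\SL{d})\cong\Z/2\Z$ for $d\ge 3$, and then invokes Theorem~\ref{thm:nullhomotopic_extension}. Your phrasing via the induced map $m_*$ on $\pi_1$ (doubling the class) is just a different wording of the same concatenation argument.
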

\begin{proof}
    From $A$ we construct another discrete-time cocycle $B$ over a circle rotation by $\frac{1}{2}\alpha$ via
    \begin{equation}
        B_{\theta} = A_{2\theta}.
    \end{equation}
    Since the rotation speed of the driving of $B$ is half of that of $A$ we find
    \begin{align}\label{eq:A_B_equivalence}
    \begin{split}
        B_{\theta}^n &= B_{\theta+(n-1)\frac{1}{2}\alpha}\cdot\hdots \cdot B_{\theta + \frac{1}{2}\alpha}\cdot B_{\theta}\\
        &= A_{2\theta+(n-1)\alpha}\cdot\hdots \cdot A_{2\theta + \alpha}\cdot A_{2\theta} \\
        &= A^n_{2\theta}.
    \end{split}
    \end{align}
    Observe that, as a loop, $B$ is the concatenation of $A$ with itself (cf.~\eqref{eq:concatenation}). We find
    \begin{equation}\label{eq:self_concatenation}
        [B] = [A] \circ [A].
    \end{equation}
    For $d\geq 3$, the fundamental group of $\SL{d}$ is isomorphic to $\Z/2\Z$. Hence, (\ref{eq:self_concatenation}) shows that $B$ is nullhomotopic. Theorem \ref{thm:nullhomotopic_extension} guarantees the existence of a continuous-time cocycle $\Phi$ over a suspension flow by $\frac{1}{2}\alpha$ that satisfies
    \begin{equation}
        \Phi^n_{(\theta, 0)} = B^n_{\theta}, \quad\forall n\in \Z.
    \end{equation}
    This, together with (\ref{eq:A_B_equivalence}) completes the proof.
\end{proof}

In the case of $d=2$, we can add an auxiliary neutral dimension to represent any discrete-time cocycle in continuous-time. To distinguish the behaviour in different dimensions, we write an element $x\in \R^{3}$ as $x=(v,w)$ where $v\in \R^2$ and $w\in \R$.
\begin{corollary}\label{crl:n_2_extension}
    Let $A$ be a discrete-time cocycle in $\SL{2}$ over a circle rotation by $\alpha$. There is a continuous-time cocycle $\Phi$ in $\SL{3}$ over a suspension flow by $\frac{1}{2}\alpha$ that satisfies
    \begin{equation}
        \Phi^n_{(\theta, 0)}(v,w) = (A^n_{2\theta}v, w), \quad\forall n\in \Z,
    \end{equation}
    for all $\theta \in S^1$, where $v\in \R^2$ and $w\in \R$.
\end{corollary}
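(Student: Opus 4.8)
The plan is to reduce Corollary~\ref{crl:n_2_extension} to Corollary~\ref{crl:n_geq_3_extension} by embedding the $\SL{2}$-cocycle $A$ into $\SL{3}$ as a block-diagonal cocycle with a trivial third dimension. Concretely, first I would define the discrete-time cocycle $\tilde A$ in $\SL{3}$ over the same circle rotation by $\alpha$ by setting
\begin{equation}
    \tilde A_\theta = \begin{pmatrix} A_\theta & 0 \\ 0 & 1 \end{pmatrix},
\end{equation}
i.e.~$\tilde A_\theta(v,w) = (A_\theta v, w)$. One checks immediately that $\det \tilde A_\theta = \det A_\theta = 1$, so $\tilde A \in C(S^1, \SL{3})$, and that the block structure is preserved under multiplication, so $\tilde A^n_\theta(v,w) = (A^n_\theta v, w)$ for all $n\in\Z$.

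Next I would apply Corollary~\ref{crl:n_geq_3_extension} to $\tilde A$ in dimension $d=3$: since $d\ge 3$, it provides a continuous-time cocycle $\Phi$ in $\SL{3}$ over the suspension flow by $\tfrac12\alpha$ with $\Phi^n_{(\theta,0)} = \tilde A^n_{2\theta}$ for all $n\in\Z$ and all $\theta\in S^1$. Combining this with the block identity for $\tilde A$ gives
\begin{equation}
    \Phi^n_{(\theta,0)}(v,w) = \tilde A^n_{2\theta}(v,w) = (A^n_{2\theta}v, w), \quad \forall n\in\Z,
\end{equation}
which is exactly the claimed conclusion. So the whole argument is essentially a one-line corollary of the previous one, plus the elementary observation that block-diagonal embedding commutes with taking $n$-step products.

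The only point that needs a word of care — and the closest thing to an ``obstacle'' — is that Corollary~\ref{crl:n_geq_3_extension} is stated for cocycles whose underlying homotopy class might be nontrivial; it works in dimension $\ge 3$ precisely because $\pi_1(\SL{d})\cong\Z/2\Z$ kills the doubled loop $[\tilde A]\circ[\tilde A]$. Here there is nothing extra to verify: we are handed an arbitrary $A\in C(S^1,\SL{2})$, we embed it into $\SL{3}$, and Corollary~\ref{crl:n_geq_3_extension} handles every homotopy class of $\SL{3}$-loops uniformly. (One could alternatively note that $[\tilde A]$ is the image of $[A]\in\pi_1(\SL{2})\cong\Z$ under the map induced by inclusion $\SL{2}\hookrightarrow\SL{3}$, which factors through $\Z/2\Z$, but this is not needed for the proof.) I would therefore present the proof as: define $\tilde A$, observe the block identity, invoke Corollary~\ref{crl:n_geq_3_extension}, and conclude — with perhaps a remark that, as in Corollary~\ref{crl:n_geq_3_extension}, the extension is not unique since the flow on $\Theta_1$ over $(0,1)$ can be reparametrised in time.
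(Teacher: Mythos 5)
Your proposal is correct and coincides with the paper's own proof: both embed $A$ block-diagonally into $\SL{3}$ as a cocycle over the same rotation by $\alpha$, note that the block structure persists under $n$-step products, and then apply Corollary~\ref{crl:n_geq_3_extension} in dimension $3$ to obtain the extension over the suspension flow by $\tfrac12\alpha$. No further comment is needed.
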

\begin{proof}
    Define the discrete-time cocycle $B$ over a circle rotation by $\alpha$ via
    \begin{equation}
        B_{\theta} = \begin{pmatrix}
        A_{\theta} & \rvline & \begin{matrix} 0 \\ 0\end{matrix} \\ \hline
        \begin{matrix} 0 & 0\end{matrix} & \rvline & 1
    \end{pmatrix} \in \SL{3}.
    \end{equation}
    Applying Corollary \ref{crl:n_geq_3_extension} to $B$ yields a continuous-time cocycle $\Phi$ over a suspension flow by $\frac{1}{2}\alpha$ that satisfies
    \begin{equation}\label{eq:n=2_extension}
        \Phi^n_{(\theta, 0)} = B_{2\theta}^n = \begin{pmatrix}
        A^n_{2\theta} & \rvline & \begin{matrix} 0 \\ 0\end{matrix} \\ \hline
        \begin{matrix} 0 & 0\end{matrix} & \rvline & 1
    \end{pmatrix}, \quad\forall n\in \Z.
    \end{equation}
    This completes the proof.
\end{proof}

\begin{revision}
\begin{remark}\label{rem:preserve_hyper}
    Canonical extensions as well as the non-canonical extensions of Corollary \ref{crl:n_geq_3_extension} and Corollary \ref{crl:n_2_extension} of a discrete-time cocycle $A$ preserve the Furstenberg--Kesten limits and the Lyapunov exponents of~$A$. In the case of canonical extensions, as well as the case \mbox{$d\geq 3$}, this can be verified by a straightforward computation. The case $d=2$ requires a little bit more effort due to the added dimension, which contributes an additional zero Lyapunov exponent.
    Additionally, a canonical extension $\Phi$ is \mbox{(non-)uniformly} hyperbolic if and only if $A$ is \mbox{(non-)uniformly} hyperbolic.
\end{remark}
\end{revision}

\begin{revision}
    \section{A continuous-time non-uniformly hyperbolic cocycle}\label{sec:construction}
    In this section we construct an example of a continuous-time cocycle in $\SL{2}$ over a quasi-periodic torus rotation, which is non-uniformly hyperbolic. We construct such an example as the canonical extension of a nullhomotopic discrete-time cocycle in $\SL{2}$ which is non-uniformly hyperbolic. Let $\phi^t$ be an irrational circle rotation by $\alpha$. The discrete-time cocycle we construct is of the form
    \begin{equation}\label{eq:A_from_f}
        A_\theta = \Lambda \cdot R_{f(\theta)},
    \end{equation}
    where $f: S^1 \to S^1$ is differentiable, $\Lambda$ is the diagonal matrix with entries $\lambda, \lambda^{-1}$ and $R_{f(\theta)} \in \SL{2}$ is the rotation matrix by angle $2\pi f(\theta)$, i.e.
    \begin{equation}
         \Lambda = \begin{pmatrix} \lambda & 0 \\ 0 & \lambda^{-1} \end{pmatrix}, \quad R_{f(\theta)} = \begin{pmatrix} \cos(2\pi f(\theta)) & \sin(2\pi f(\theta)) \\ -\sin(2\pi f(\theta)) & \cos(2\pi f(\theta)) \end{pmatrix}.
    \end{equation}
    For the identity function $f(\theta) = \theta$, the resulting cocycle $A_\theta$ has been studied by Herman \cite{herman1981construction}. It was shown that for suited rotation constants $\alpha$, this cocycle is non-uniformly hyperbolic. However, the following lemma shows that for $f$ the identity function, the resulting cocycle is not nullhomotopic and, hence, does not have a canonical extension. 
    \begin{lemma}\label{lem:f_nullhomotopic}
        The cocycle $A_\theta$ as defined in~\eqref{eq:A_from_f} is nullhomotopic if and only if the map $f$ is homotopic to a constant.
    \end{lemma}
    \begin{proof}
        Assume that $f:S^1 \to S^1$ is homotopic to a constant, i.e.~there is a homotopy $h:S^1 \times [0,1] \to S^1$ such that
        \begin{equation}
            h(\theta, 0) = 0, \qquad h(\theta, 1) = f(\theta).
        \end{equation}
        Then, $A:S^1 \to \SL{2}$ is nullhomotopic via the homotopy
        \begin{nalign}
            H:S^1\times [0,1] &\to \SL{2},\\
            (\theta, t) \hspace{6mm} &\mapsto \Lambda \cdot R_{h(\theta, t)}.
        \end{nalign}

        Now, assume that the cocycle $A$ is nullhomotopic, i.e.~that $A$ is homotopic to the identity via a homotopy $H$. For $v \in \R^2 \setminus \{0\}$, let $\overline{v}\in \mathds{P}^1$ be the projectivisation of $v$. Let $e_1, e_2 \in \R^2$ be the unit vectors. We identify the projective space $\mathds{P}^1$ with $S^1$ such that $\overline{e_1} = 0$ and $\overline{e_2} = \frac{1}{2}$. We construct a map
        \begin{nalign}
                h:S^1\times [0,1] &\to S^1,\\
                (\theta, t) \hspace{6mm} &\mapsto \overline{H(\theta, t) e_1}.
        \end{nalign}
        Since $H(\theta, 0)= \Id$, we get $h(\theta, 0) = 0$. On the other hand $H(\theta, 1) = A_\theta = \Lambda\cdot R_{f(\theta)}$. Noting that rotation numbers expressed in the projective-space $\mathds{P}^1$ are doubled from those in $\R^2$, we find $\overline{R_{f(\theta)} e_1} =2f(\theta)$, while $\Lambda$ can not rotate any element in $\mathds{P}^1$ by more than $\frac{1}{2}$. We conclude that $\Lambda$ has no influence on the winding number of~$h(\cdot, 1)$. Hence, the winding number of $h(\cdot, 1)$ is twice the winding number of~$f$. Since $h(\cdot, 1)$ is homotopic to a constant, it has winding number~0. Therefore, $f$ must also have winding number 0, which is equivalent to $f$ being homotopic to a constant. This completes the proof.
    \end{proof}
    It remains to find a map $f$ that is homotopic to a constant and a rotation constant $\alpha$, such that the cocycle, as defined in~\eqref{eq:A_from_f}, is non-uniformly hyperbolic. We choose a map $f$ with the following properties
    \begin{enumerate}[(i)]
        \item $f$ is nullhomopic;
        \item $f^{-1}(\frac{1}{2})$ is non-empty;
        \item $\partial_\theta f(\theta) \neq 0$ for all $\theta \in f^{-1}(\frac{1}{2})$.
    \end{enumerate}
    Let $A$ be the cocycle as defined in~\eqref{eq:A_from_f} with a map $f$ that has the three properties listed above, e.g.~$f(\theta) = \frac12 + \varepsilon \sin(2\pi \theta)$ for $\varepsilon \in (0,1)$. Given that $\lambda$ is sufficiently large, a result by Young \cite[Theorem 1]{young1997lyapunov} states that there is $\alpha\in \R\setminus \mathds{Q}$ such that $A$ has positive Lyapunov exponent as a cocycle over a circle rotation by $\alpha$. We note that properties (ii) and (iii) are equivalent to condition (T1) in the paper by Young. Additionally, \cite[Remark 1]{young1997lyapunov} states that the cocycle $A$ cannot be uniformly hyperbolic. Hence, $A$ must be non-uniformly hyperbolic. Lemma \ref{lem:f_nullhomotopic} shows that $A$ is nullhomotopic and, therefore, has a canonical extension $\Phi$, as shown by Theorem \ref{thm:nullhomotopic_extension}. By Remark \ref{rem:preserve_hyper}, the continuous-time cocycle $\Phi$ is non-uniformly hyperbolic. 
    
    Note that for $f(\theta) = \frac12 + \varepsilon \sin(2\pi \theta)$ and $\varepsilon$ approaching 0, we can find non-uniformly hyperbolic and nullhomotopic discrete-time cocycles that are arbitrarily close to an autonomous cocycle.
\end{revision}

\section*{Acknowledgements}

This research has been partially supported by Deutsche Forschungsgemeinschaft (DFG) through grant CRC 1114 ``Scaling Cascades in Complex Systems'', Project Number 235221301, Project A08 ``Characterization and prediction of quasi-stationary atmospheric states’’. M.E. additionally acknowledges the Einstein
Foundation (IPF-2021-651) and Germany’s Excellence Strategy—The Berlin Mathematics Research Center MATH+ (EXC-2046/1, project ID: 390685689) for support.
We thank Alex Blumenthal for insightful discussions and the anonymous reviewer for their excellent suggestions.

\bibliography{bibliography.bib}
\bibliographystyle{myalpha}

\end{document}